\newtheorem{thm}{Theorem}[section]  
\newtheorem{lem}[thm]{Lemma}
\newtheorem{prop}[thm]{Proposition} 
\newtheorem{df-pr}[thm]{Definition-Proposition}
\theoremstyle{definition}
\newtheorem{defn}[thm]{Definition}
\newtheorem{rem}[thm]{Remark}
\newtheorem{exm}[thm]{Example}
\numberwithin{equation}{section}
\newcommand{\Ess}{{\mathcal{E}ss}}
\newcommand{\ee}{{e}}
\newcommand{\LL}{{\mathbb L}}
\newcommand{\PP}{{\mathbb P}}
\newcommand{\ZZ}{{\mathbb Z}}
\newcommand{\sfn }{{\mathsf n}}
\newcommand{\bfk}{{\mathbf k }}
\newcommand{\bfm}{{\mathbf m }}
\newcommand{\bff}{{\mathbf f }}
\newcommand{\bfp}{{\mathbf p}}
\newcommand{\bfs}{{\mathbf s}}
\newcommand{\CH}{{C\!H}}
\newcommand{\CK}{{C\!K}}
\newcommand{\calL}{{\mathcal L}}
\newcommand{\calQ}{{\mathcal Q}}
\newcommand{\scA}{{\mathscr A}}
\newcommand{\scO}{{\mathscr O}}
\newcommand{\scP}{{\mathscr P}}
\newcommand{\scS}{{\mathscr S}}
\newcommand{\surj}{\twoheadrightarrow}
\newcommand{\Fl}{{Fl}}
\newcommand{\gr}{\operatorname{gr}}
\newcommand{\rk}{{\operatorname{rk}}}
\newcommand{\supp}{{\operatorname{supp}}}
\newcommand{\FSVT}{{\it FSVT}}
\begin{document} 
\title{Vexillary degeneracy loci classes in K-theory and algebraic cobordism}
\author{Thomas Hudson and Tomoo Matsumura}
\date{} 
\maketitle 
\begin{abstract}
In this paper, we prove determinant formulas for the $K$-theory classes of the structure sheaves of degeneracy loci classes associated to vexillary permutations in type $A$. As a consequence we obtain determinant formulas for Lascoux--Sch{\"u}tzenberger's double Grothendieck polynomials associated to vexillary permutations. Furthermore, we generalize the determinant formula to algebraic cobordism. 
\end{abstract}
\section{Introduction}
Let $E_1 \subset \dots \subset E_{n-1} \to H_{n-1} \surj \cdots \surj H_2 \surj H_1$ be a sequence of maps of vector bundles over a nonsingular variety $X$ where the subscripts indicate the rank. For a permutation $w\in S_{n}$, we define the degeneracy locus $X_w$ in $X$ by
\[
X_w:=X_w(E_{\bullet} \to H_{\bullet}) := \{ x \in X \ |\ \rk(F_q|_x \to H_p|_x) \leq r_w(p,q) \ \forall p,q\}
\]
where $r_w(p,q)=\sharp\{i \leq p \ |\ w(i)\leq q\}$. 
A permutation $w \in S_n$ is called \emph{vexillary} if it avoids the pattern $(2143)$, \textit{i.e.} there is no $a<b<c<d$ such that $w(b)<w(a)<w(d)<w(c)$. The primary goal of this article is to prove determinantal formulas for the $K$-theory class $[\scO_{X_w}]$ of the structure sheaf of $X_w$, where $w$ is a vexillary permutation (Theorem \ref{thmMainA}). Such formulas, which is written in terms of Segre classes, are later generalized to algebraic cobordism (Theorem \ref{thmMainAALG}). Our proof is based on our joint work \cite{HIMN} with Ikeda and Naruse and on \cite{HudsonMatsumura}, where determinantal formulas were obtained for Grassmannian permutations. Independently from this work, Anderson \cite{Anderson2016} also proved a determinant formula for $[\scO_{X_w}]$ in terms of Chern classes with a similar method.

Lascoux and Sch{\"u}tzenberger (\cite{Lascoux1}, \cite{LascouxSchutzenberger3}) introduced the double Grothendieck polynomial $G_w(x|b)$ to represent the $K$-theory classes of the structure sheaves of Schubert varieties and Fomin--Kirillov  (\cite{GrothendieckFomin}, \cite{DoubleGrothendieckFomin}) gave their combinatorial description in terms of pipe dreams or rc graphs. In the case when $w$ is vexillary, Knutson--Miller--Yong \cite{KnutsonMillerYong} described $G_w(x|b)$ in terms of \emph{flagged set-valued tableaux}, unifying the work of Wachs \cite{Wachs} on flagged tableaux, and Buch \cite{BuchLRrule} on set-valued tableaux. If the codimension of $X_w$ coincides with the length of $w$, then we know from the works of Fulton and Lascoux \cite{FultonLascoux} and Buch \cite{BuchQuiver} that $[\scO_{X_w}] = G_w(x|b)$ where we set $x_i = c_1(\ker(H_i\to H_{i-1}))$ and $b_i=c_1((E_i/E_{i-1})^{\vee})$. As a consequence, our formula also proves determinantal formulas of $G_w(x|b)$, generalizing the Jacobi-Trudi type formulas of double Grothendieck polynomials associated to Grassmann permutations obtained in \cite{HIMN} and \cite{HudsonMatsumura}. Note that in \cite{Lenart2000}, \cite{KirillovDunkl}, \cite{Kirillov2015}, and \cite{Yeliussizov}, one can find different determinantal formulas of Grothendieck polynomials for Grassmannian permutations as well as to certain vexillary permutations. Those formulas are in terms of complete/elementary symmetric polynomials while ours are in terms of Grothendieck polynomials associated to one row partitions.

Let us now recall the corresponding results for cohomology. Lascoux and Sch{\"u}tzenberger defined double Schubert polynomials indexed by permutations in \cite{ClassesLascoux} and \cite{SchubertLascoux}. Fulton \cite{FlagsFulton} showed that they give the cohomology classes of the degeneracy loci
given by rank conditions associated to permutations. Lascoux and Sch{\"u}tzenberger also introduced vexillary permutations where the associated Schubert polynomials are given by generalized Schur determinants. In \cite{FlagsFulton}, Fulton gave a simple description of the degeneracy loci given by vexillary permutations (see also \cite{AndersonFulton} and \cite{AndersonFulton2}) and proved the determinant formula for the cohomology classes of such degeneracy loci. For Grassmannian permutations (which are examples of vexillary permutations), the corresponding (double) Schubert polynomials coincide with the (factorial) Schur polynomials, and their determinant formula is known as the Jacobi--Trudi formula or the determinant formulas of Kempf--Laksov \cite{KempfLaksov} and Damon \cite{Damon1974}, which further specializes to Giambelli--Thom--Porteous formula.

The paper is organized as follows. In Section \ref{SecCK}, we review some of the basics on Segre classes in connective $K$-theory. In Section \ref{SecVex}, we recall the partition associated to a vexillary permutation $w$ and a flagging set of $w$, which corresponds to an (inflated) triple of type $A$ in \cite{AndersonFulton2}. In Section \ref{SecMain}, we describe our main theorem in terms of the partition and a flagging set. The proof is based on the construction of a resolution using the data of the flagging set and the computation of the class of $X_w$ as the pushforward of the class of the resolution following \cite{HIMN}. In Section \ref{SecGro}, we describe the corresponding determinant formula for the vexillary double Grothendieck polynomials. In Section \ref{SecAlgCob}, we also take into consideration the algebraic cobordism of Levine--Morel \cite{LevineMorel} and describe the corresponding classes as an infinite linear combination of Schur determinants whose coefficients are given by a certain power series defined by means of the universal formal group law.

In this paper, all schemes and varieties are assumed to be quasi-projective over an algebraically closed field $\bfk$ of characteristic zero, unless otherwise stated.

\section{Connective $K$-theory and Segre classes}\label{SecCK}
Connective $K$-theory, denoted by $\CK^*$, is an example of oriented cohomology theory built out of the algebraic cobordism of Levine and Morel. For the detailed construction we refer the reader to \cite{DaiLevine}, \cite{Hudson}, and \cite{LevineMorel}. Connective $K$-theory assigns to a smooth variety $X$  a commutative graded algebra $\CK^*(X)$  over the coefficient ring $\CK^*({pt})$. Here the ring $\CK^*({pt})$ is isomorphic to the polynomial ring $\ZZ[\beta]$ by setting $\beta$ to be $(-1)$ times the class of degree $-1$ obtained by pushing forward the fundamental class of the projective line along the structural morphism $\PP^1 \to {pt}$. The $\ZZ[\beta]$-algebra $\CK^*(X)$ specializes to the Chow ring $\CH^*(X)$ and the Grothendieck ring $K(X)$ by respectively setting $\beta$ equal to $0$ and $-1$. For any closed equidimensional subvariety $Y$ of $X$, there exists an associated fundamental class  $[Y]_{\CK^*}$ in $\CK^*(X)$ which specializes to the class $[Y]$ in $\CH^*(X)$ and also to the class of the structure sheaf $\mathcal{O}_Y$ of $Y$ in $K(X)$. In the rest of the paper, we denote the fundamental class of $Y$ in $\CK^*(X)$ by $[Y]$ instead of $[Y]_{\CK^*}$.

As a feature of any oriented cohomology theory, connective $K$-theory admits a theory of Chern classes. For a line bundle $L$, its first Chern class $c_1(L)$ corresponds to the usual Chern class in $\CH^*(X)$ and to the class $1-[L^{\vee}]$ in $K(X)$ under the specialization $\beta=0$ and $\beta=-1$ respectively. For line bundles $L_1$ and $L_2$ over $X$, their 1st Chern classes $c_1(L_i)\in  \CK^1(X)$ satisfy the following equality:
\[
c_1(L_1\otimes L_2)=c_1(L_1)+c_1(L_2)+\beta c_1(L_1)c_1(L_2).
\]
It follows that $c_1(L^{\vee}) = \frac{-c_1(L)}{1+\beta c_1(L)}$. For a variable $u$, we denote $\bar u := \frac{-u}{1+\beta u}$. The reader should be aware that our sign convention for $\beta$ is opposite to the one used in the references \cite{DaiLevine}, \cite{Hudson}, \cite{LevineMorel}. 
For computations it is convenient  to combine the Chern classes into a Chern polynomial $c(E;u):=\sum_{i=0}^{\ee} c_i(E) u^i$. For each virtual bundle $E-F$ as an element of $K(X)$, one has the relative Chern classes $c_i(E-F)$ defined by $c(E-F;u):=c(E;u)/c(F;u)$.
\begin{defn} \label{defnSeg}
We define the \emph{Segre classes} for a virtual bundle $E-F$ by the following:
\begin{equation}\label{segre vir}
\scS(E-F;u):=\sum_{m\in \ZZ} \scS_{m}(E-F) u^{m}= \frac{1}{1 + \beta u^{-1}} \frac{c(E - F;\beta)}{c(E-F;-u)}.
\end{equation}
\end{defn}
Consequently the Segre classes satisfy the following identities ({\it cf.} \cite{HIMN}, \cite{HudsonMatsumura}): 
\begin{eqnarray}
\scS_m(E-F) 
&=&\sum_{p=0}^{\rk F} (-1)^p c_p(F)\scS_{m-p}(E)\frac{1}{c(F;\beta)}  \label{eqrelS1}\\
&=&\sum_{p=0}^{\rk F} c_p(F^{\vee}) \sum_{q=0}^p \binom{p}{q}\beta^q \scS_{m-p+q}(E)\label{eqrelS2}.
\end{eqnarray}

The following proposition was obtained in \cite{HIMN} based on the results in  \cite{BuchQuiver} and \cite{Vishik}.
\begin{prop}\label{push of tensor} 
Let $\pi: \PP^*(E)\rightarrow X$ be the dual projective bundle of a rank $e$ vector bundle $E\to X$ and let $\tau$ be the first Chern class of its tautological quotient line bundle $\calQ$. Let $F$ be a vector bundle over $X$ of rank $f$ and denote its pullback to $\PP^*(E)$ also by $F$. We have 
\[
\pi_*\left(\tau^sc_f(\calQ \otimes F^{\vee})\right) = \scS_{s+f-e+1}(E-F).
\]
\end{prop}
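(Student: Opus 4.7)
The plan is to compute the push-forward by using the splitting principle on $F$, rewriting $c_f(\calQ\otimes F^\vee)$ as a polynomial in $\tau$ with coefficients in $\CK^*(X)$, and then applying the basic projective bundle push-forward formula $\pi_*(\tau^m)=\scS_{m-e+1}(E)$ for the dual projective bundle. First, by the splitting principle I may assume $F^\vee=L_1\oplus\cdots\oplus L_f$ with $\gamma_j:=c_1(L_j)$ the Chern roots of $F^\vee$, so that the Chern roots of $F$ are $\bar\gamma_j=c_1(L_j^\vee)$. Since $\calQ$ is a line bundle, the formal group law of connective $K$-theory gives $c_f(\calQ\otimes F^\vee)=\prod_{j=1}^f(\tau+\gamma_j+\beta\tau\gamma_j)$. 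The key algebraic observation is the identity $\tau+\gamma_j+\beta\tau\gamma_j=(1+\beta\gamma_j)(\tau-\bar\gamma_j)$, which factors each term into a scalar in $\CK^*(X)$ and a linear factor in $\tau$. Collecting the scalars into $c(F^\vee;\beta)=\prod_j(1+\beta\gamma_j)$ and expanding the remaining product via $e_i(\bar\gamma_1,\dots,\bar\gamma_f)=c_i(F)$ then yields
\[
c_f(\calQ\otimes F^\vee)=c(F^\vee;\beta)\sum_{i=0}^f(-1)^i c_i(F)\,\tau^{f-i}.
\]

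Next I would multiply by $\tau^s$ and push forward term by term. Applying the standard formula $\pi_*(\tau^m)=\scS_{m-e+1}(E)$, which in $\CK^*$ follows from the projective bundle theorem combined with the generating-function definition \eqref{segre vir} of the Segre classes, I obtain
\[
\pi_*\bigl(\tau^s c_f(\calQ\otimes F^\vee)\bigr)=c(F^\vee;\beta)\sum_{p=0}^f(-1)^p c_p(F)\,\scS_{s+f-e+1-p}(E).
\]
A short check shows $(1+\beta\gamma)(1+\beta\bar\gamma)=1$, hence $c(F^\vee;\beta)=1/c(F;\beta)$, and identity \eqref{eqrelS1} immediately identifies the right-hand side with $\scS_{s+f-e+1}(E-F)$, as required.

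The main obstacle is the push-forward formula $\pi_*(\tau^m)=\scS_{m-e+1}(E)$ itself, which is the only genuinely non-formal input; once it is granted, the rest is a short manipulation with the formal group law together with some Segre-class bookkeeping. This push-forward formula is the connective $K$-theoretic analogue of the classical Chow-theoretic Segre identity, and is established via the $\CK^*$ projective bundle formula combined with the generating-function definition of Segre classes — precisely what the cited results of \cite{BuchQuiver} and \cite{Vishik} furnish.
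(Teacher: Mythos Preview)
Your argument is correct. The factorisation $\tau+\gamma_j+\beta\tau\gamma_j=(1+\beta\gamma_j)(\tau-\bar\gamma_j)$ is exactly the right move, and the identification $c(F^\vee;\beta)=1/c(F;\beta)$ together with \eqref{eqrelS1} closes the loop cleanly. Your reduction to the base case $\pi_*(\tau^m)=\scS_{m-e+1}(E)$ is precisely the content that the references supply.

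There is little to compare against, however: the paper does not give its own proof of this proposition but simply imports it from \cite{HIMN}, where it is established using the results of \cite{BuchQuiver} and \cite{Vishik}. What you have written is essentially the reduction step that one would carry out in \cite{HIMN} as well --- decompose $c_f(\calQ\otimes F^\vee)$ as a polynomial in $\tau$ with coefficients pulled back from $X$, push forward monomials in $\tau$ using the basic Segre pushforward, and reassemble via the relative Segre identity. So your proof is not a different route so much as an explicit unpacking of the cited result; the only black box left is exactly the one you flagged, namely $\pi_*(\tau^m)=\scS_{m-e+1}(E)$ in $\CK^*$, which is where the actual work of \cite{BuchQuiver} and \cite{Vishik} enters.
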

\begin{rem}\label{remLine}
When $E$ is a line bundle, we have $c_1(E)^sc_{f}(E \otimes F^{\vee}) = \scS_{f+s}(E-F)$ by regarding $E$ as the tautological quotient line bundle of $\PP^*(E)$.
\end{rem}
\section{Vexillary permutation}\label{SecVex}
Let $S_n$ be a permutation group of $\{1,\dots,n\}$ and $w \in S_n$.  The \emph{rank function} of $w$ is defined by
\[
r_w(p,q) := \sharp\{ i\leq p  |\ w(i) \leq q\}.
\]
The diagram of $w$, denoted by $D(w)$, is defined by
\[
D(w):=\{(p,q)\in \{1,\dots,n\}\times \{1,\dots,n\} \ |\ \pi(p) >q, \ \mbox{and} \ \pi^{-1}(q)>p\}.
\]
We call an element of the grid $\{1,\dots, n\} \times \{1,\dots,n\}$ a box. Two elements $(p,q)$ and $(p',q')$ in $D(w)$ are said to be connected 
we can find a sequence of adjacent boxes connecting them that are contained in $D(w)$. The \emph{essential set} $\Ess(w)$ of $w$ is the subset of $D(w)$ given by
\[
\Ess(w):=\{(p,q) \ |\ p,q\leq n-1, w(p)>q, w(p+1)\leq q, w^{-1}(q)>p, \mbox{ and } w^{-1}(q+1)\leq p\}.
\]
A permutation $w \in S_n$ is called \emph{vexillary} if it avoids the pattern $(2143)$, \textit{i.e.} there is no $a<b<c<d$ such that $w(b)<w(a)<w(d)<w(c)$. A characterization of vexillary permutations in terms of the essential sets has been given by Fulton \cite[Proposition 9.6]{FlagsFulton}. In particular, $w$ is vexillary if and only if the boxes in $\Ess(w)$ are placed along the direction going from north-east to south-west, {\it i.e.} for any $(p,q), (p',q') \in \Ess(w)$, $p\geq p'$ implies $q\leq q'$.

A partition $\lambda$ of length $r$ is a non-increasing sequence of $r$ positive integers $(\lambda_1,\dots,\lambda_r)$ with the convention that $\lambda_i=0$ for $i>r$. We identify a partition $\lambda$ with its Young diagram $\{(i,j) \ |\ 1\leq i\leq r, 1\leq j\leq \lambda_i\}$ in English notation. For each vexillary permutation $w \in S_n$, one can assign a partition $\lambda(w)$: let the number of boxes $(i,i+k)$ in the $k$-th diagonal of the Young diagram of $\lambda(w)$ be equal to the number of boxes in the $k$-th diagonal of $D(w)$ for each $k$ (see \cite{KnutsonMillerYong, KnutsonMillerYong2}). This defines a bijection $\phi$ from $D(w)$ to $\lambda(w)$, which takes the $j$-th box in the $k$-th diagonal of $D(w)$ to the $j$-th box in the $k$-th diagonal of $\lambda(w)$. In other words, we set $\phi(p,q) = (p-r_w(p,q), q-r_w(p,q))$ for each $(p,q)\in D(w)$. Under this bijection $\phi$, the boxes in $\Ess(w)$ correspond bijectively to the south-east corners of $\lambda(w)$.
\begin{defn}\label{lemVexFilling}
Let $w\in S_n$ be a vexillary permutation.  A subset $\bff(w)=\{(p_i,q_i),i=1,\dots,d\}$ of $\{1,\dots,n\}\times \{1,\dots,n\}$ is called a \emph{flagging set} for $w$ if it satisfies 
\begin{enumerate}[(i)]
\item $p_1\leq p_2 \leq \cdots \leq p_d, \ \ \ q_1\geq q_2 \geq \cdots \geq q_d$;
\item The set $\bff(w)$ contains $\Ess(w)$; 
\item $p_i - r_i = i$ for $i=1,\dots,d$ where $r_i:=r_w(p_i,q_i)$.
\end{enumerate}
\end{defn}
Although you can find the following claim in \cite{AndersonFulton, AndersonFulton2}, we give a proof for completeness ({\it cf.} \cite{FlagsFulton, MacdonaldNote}).
\begin{lem}
If $\bff(w)=\{(p_i,q_i),i=1,\dots,d\}$ is a flagging set of a vexillary permutation $w\in S_n$, then the partition $\lambda(w)$ is given by $\lambda_i = q_i-p_i +i$ for each $i=1,\dots, d$. 
\end{lem}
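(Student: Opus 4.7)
The plan is to work with the map
\[
\psi(p,q):=(p-r_w(p,q),\;q-r_w(p,q)),
\]
which extends the bijection $\phi$ of Section~\ref{SecVex} (agreeing with it on $D(w)$). Condition~(iii) says $p_i-r_i=i$, so setting $\mu_i:=q_i-p_i+i=q_i-r_i$ the claim becomes $\mu_i=\lambda_i$, or equivalently $\psi(p_i,q_i)=(i,\lambda_i)$. To dispatch the essential indices, order $\Ess(w)=\{(P_1,Q_1),\dots,(P_e,Q_e)\}$ with $P_1<\cdots<P_e$ (whence $Q_1>\cdots>Q_e$ by the vexillary characterization recalled above), set $R_j:=r_w(P_j,Q_j)$, and put $(a_j,b_j):=(P_j-R_j,\,Q_j-R_j)$. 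Since $\phi$ carries $\Ess(w)$ bijectively onto the southeast corners of $\lambda(w)$, one has $\lambda_l=b_j$ for $a_{j-1}<l\le a_j$ (with $a_0:=0$) and $\lambda_l=0$ for $l>a_e$. Condition~(iii) forces each $(P_j,Q_j)$ to sit at index $a_j$ in $\bff(w)$, so $\mu_{a_j}=b_j=\lambda_{a_j}$.

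Next I would establish the monotonicity $\mu_l\ge\mu_{l+1}$ for consecutive flagging indices. Decomposing the rank change along the rectilinear path $(p_l,q_l)\to(p_{l+1},q_l)\to(p_{l+1},q_{l+1})$ and applying (iii) at both $l$ and $l+1$ yields $p_{l+1}-p_l=1+A-B$ where
\[
A:=\#\{j\in(p_l,p_{l+1}]\,:\,w(j)\le q_l\}\ge 0,\qquad B:=\#\{k\in(q_{l+1},q_l]\,:\,w^{-1}(k)\le p_{l+1}\}\le q_l-q_{l+1},
\]
and therefore
\[
\mu_l-\mu_{l+1}=(p_{l+1}-p_l)+(q_l-q_{l+1})-1=A+(q_l-q_{l+1}-B)\ge 0.
\]
Combined with the already-established values $\mu_{a_{j-1}}=b_{j-1}$ and $\mu_{a_j}=b_j$, this traps $\mu_l\in[b_j,b_{j-1}]$ for $l\in[a_{j-1},a_j]$.

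The main obstacle is upgrading this sandwich to the equality $\mu_l=b_j$ on the open interval $(a_{j-1},a_j)$ (with the parallel boundary arguments giving $\mu_l=b_1$ for $l\le a_1$ and $\mu_l=0$ for $l>a_e$). Condition~(i) confines such $(p_l,q_l)$ to the rectangle $[P_{j-1},P_j]\times[Q_j,Q_{j-1}]$, and the vexillary hypothesis guarantees that this rectangle contains no essential box other than its two corners. Inside the rectangle I would analyze $r_w$ using condition~(iii) together with the defining equalities $w(P_j)>Q_j$, $w(P_j+1)\le Q_j$, $w^{-1}(Q_j)>P_j$, $w^{-1}(Q_j+1)\le P_j$ (and the analogues at $(P_{j-1},Q_{j-1})$), showing that any solution of $p-r_w(p,q)=l$ inside the rectangle lies on the anti-diagonal $q-p=b_j-l$, which forces $q_l-r_w(p_l,q_l)=b_j$ automatically. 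This rectangular analysis is where vexillarity enters essentially and constitutes the technical heart of the lemma, as in \cite{FlagsFulton,MacdonaldNote}.
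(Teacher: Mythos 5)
Your proposal sets up a sensible framework, and two of its ingredients are genuinely correct and nicely executed: the observation that condition (iii) pins each essential box $(P_j,Q_j)$ to the index $a_j=P_j-R_j$, so that $\mu_{a_j}=b_j=\lambda_{a_j}$; and the monotonicity $\mu_l\geq\mu_{l+1}$, obtained by decomposing the rank change along the rectilinear path and using (iii) at both ends. That monotonicity lemma is a clean piece of bookkeeping that the paper's proof does not isolate, and the resulting sandwich $\mu_l\in[b_j,b_{j-1}]$ is a real (if partial) constraint.

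However, there is a genuine gap: the step you yourself label ``the technical heart of the lemma'' is never carried out. The sandwich $b_j\leq\mu_l\leq b_{j-1}$ does not yield $\mu_l=b_j$ on $(a_{j-1},a_j)$, and your proposed remedy --- that every solution of $p-r_w(p,q)=l$ in the rectangle $[P_{j-1},P_j]\times[Q_j,Q_{j-1}]$ lies on a single anti-diagonal --- is only asserted (``I would analyze \dots showing that \dots''), not proved. This is precisely where the paper does its actual work: it shows that vexillarity forces $D(w)\cap R$ to lie in the top row and leftmost column of $R$, that it has at most two connected components, and then runs a three-case analysis (hook shape, top row, leftmost column) to conclude that $(p_k,q_k)$ and $\phi^{-1}(k,\lambda_k)$ sit on the same diagonal. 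Without some version of that analysis your claimed anti-diagonal statement is unsubstantiated, and it is not a formality --- the level set of $p-r_w(p,q)=l$ is in general a staircase, not an anti-diagonal, so one must use the specific structure of $D(w)$ inside $R$. The same criticism applies to the two boundary regimes you dispatch with ``parallel boundary arguments'': for $l\leq a_1$ the sandwich gives only the lower bound $\mu_l\geq b_1$, and for $l>a_e$ it gives only $\mu_l\leq b_e$ together with the trivial $\mu_l\geq 0$; the paper needs a separate argument for the tail (the southwest rectangle $R'$, where $q_r$ being the maximal row index of $D(w)$ forces $p-r_w(p,q)=i$ to be equivalent to $q-p=-i$). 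As it stands, your text is an outline whose central claims are named but not established.
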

\begin{proof}
If $(p_i,q_i)$ is in $\Ess(w)$, it maps to a southeast corner of $\lambda$ under $\phi$ and hence (iii) implies $\lambda_i = q_i-p_i +i$. Suppose $(p_i,q_i)$ is the first element that is contained in $\Ess(w)$ so that there is no box in the diagram to the east of $(p_i,q_i)$. We see that the condition (iii) implies that $q_j-p_j =\lambda_j-j$ for all $j=1,\dots,i-1$. Next let $(p_i,q_i)$ and $(p_j,q_j)$ be consecutive elements of the flagging set which are contained in $\Ess(w)$ with $i>j$. Consider the $(p_i-p_j+1)\times (q_j-q_i+1)$ rectangle $R$ in $\{1,...,n\}\times \{1,...,n\} $ with $(p_j,q_j)$ as the northeast corner and $(p_i,q_i)$ as the southwest corner. For $i>k>j$, $(p_k,q_k)$ must be in $R$. Since $w$ is vexillary, $\Ess(w)\cap R$ consists of only $(p_i,q_i)$ and $(p_j,q_j)$. This implies that an element of $R$ is in $D(w)$ only if it is on the leftmost column or on the top row. We can also observe that the northwest corner of $R$ is contained in $D(w)$ and connected to $(p_i,q_i)$ or $(p_j,q_j)$. Moreover we see that the number of connected components of $D(w) \cap R$ is at most two. Thus we have one of the following three cases: (1) $D(w)\cap R$ is a hook shape, (2)  $D(w)\cap R$ contains the top row, or (3) $D(w)\cap R$ contains the leftmost column. In each case, we see by the condition (iii) that, for $i>k>j$, $(p_k,q_k)$ and $\phi^{-1}(k,\lambda_k)$ are on the same diagonal, {\it i.e.} $\lambda_k-k=q_k-p_k$. This proves the claim for $r\geq i$ where $r$ is the length of $\lambda(w)$. Suppose that $i>r$. Then $(p_i, q_i)$ must be in the southwest $(n-p_r+1)\times q_r$ rectangle $R'$ containing $(p_r,q_r)$ in $\{1,\dots,n\}\times \{1,\dots, n\}$. Since $q_r$ is the maximum row index of $D(w)$, we can see that $p-r_w(p,q)=i$ if and only if $q-p=-i$. Hence we have $q_i-p_i + i =0$. This completes the proof.
\end{proof}
\section{Determinant formula of $X_w(E_{\bullet} \to H_{\bullet})$}\label{SecMain}
Let $E_1 \subset \dots \subset E_{n-1} \stackrel{\phi}{\to} H_{n-1} \surj \cdots \surj H_2 \surj H_1$ be a full flag of vector bundles on $X$ followed by a map to a dual full flag. The subscripts indicate the rank. For each $w\in S_{n}$, we define the degeneracy locus $X_w$ in $X$ by
\[
X_w:=X_w(E_{\bullet} \to H_{\bullet}) := \{ x \in X \ |\ \rk(E_q|_x \to H_p|_x) \leq r_w(p,q) \ \forall p,q\}.
\]
By Proposition 4.2 \cite{FlagsFulton}, it suffices to consider the rank conditions for $(p,q) \in \Ess(w)$. In what follows, we assume that the bundles and maps are sufficiently generic so that the codimension of $X_w$ is the length $\ell(w)$ of $w$. 
Suppose that $w$ is vexillary and choose a flagging set $\{(p_i,q_i), i=1,\dots,d\}$ of $w$, then we can write
\[
X_w = \{ x \in X \ |\ \rk(E_{q_i}|_x \to H_{p_i}|_x) \leq r_w(p_i,q_i) \ \forall i=1,\dots,d\}.
\]
The following is our main theorem.
\begin{thm}\label{thmMainA}
Let $w \in S_n$ be a vexillary permutation and $\bff(w)=\{(p_i,q_i); i=1,\dots,d\}$ an arbitrary flagging set of $w$.  Let $\scA_{m}^{[i]}:=\scA_{m}^{[i]}(\bff(w)):=\scS_{m}(H_{p_i}-E_{q_i})$ for each $m\in \ZZ$. Then the class of the degeneracy locus $X_w$ in $\CK^*(X)$ is given by
\begin{equation}\label{maineq1}
[X_w] = \det\left(  \sum_{s=0}^{\infty} \binom{i-d}{s}\beta^s \scA_{\lambda_i+j-i+s}^{[i]} \right)_{1\leq i,j \leq d}
\end{equation}
and also by
\begin{equation}\label{maineq2}
[X_w] = \det\left(  \sum_{s=0}^{\infty} \binom{i-j}{s}\beta^s \scA_{\lambda_i+j-i+s}^{[i]} \right)_{1\leq i,j \leq d},
\end{equation}
where $\lambda(w)=(\lambda_1,\dots,\lambda_d)$.
\end{thm}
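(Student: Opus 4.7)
The plan is to emulate the resolution-and-pushforward strategy of \cite{HIMN}, now extended from Grassmannian to vexillary permutations by using all $d$ elements of the flagging set $\bff(w)$. The goal is to realize $[X_w]$ as an iterated pushforward of a product of twisted Chern classes along a tower of $d$ projective bundles, and then to apply Proposition~\ref{push of tensor} once at each level.

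First, I would construct a birational resolution $\pi : Z \to X_w$ as a tower
\[
Z = Z_d \xrightarrow{\pi_d} Z_{d-1} \xrightarrow{\pi_{d-1}} \cdots \xrightarrow{\pi_1} Z_0 = X,
\]
where at step $i$ the map $\pi_i : Z_i = \PP^*(M_i) \to Z_{i-1}$ is the projective bundle of an auxiliary bundle $M_i$ of rank $p_i - i + 1$ built from the pullback of $H_{p_i}$ by quotienting out a rank-$(i-1)$ subbundle determined by the previously chosen tautological quotient line bundles $\calQ_1, \ldots, \calQ_{i-1}$. The rank $p_i - i + 1$ is forced by the condition $p_i - r_i = i$ of Definition~\ref{lemVexFilling} together with the desired codimension count. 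Inside $Z$ I would identify a closed subvariety $\tilde X_w$ cut out fiberwise by the vanishing of the induced composites $E_{q_i} \to H_{p_i} \to \calQ_i$ for $i = 1, \ldots, d$; its fundamental class in $\CK^*(Z)$ is a product
\[
[\tilde X_w] = \prod_{i=1}^d c_{q_i}\bigl(\calQ_i \otimes E_{q_i}^\vee\bigr),
\]
and the flagging conditions together with Fulton's description of vexillary loci in \cite{FlagsFulton} should ensure that $\pi$ restricts to a birational morphism $\tilde X_w \to X_w$, so that $\pi_*[\tilde X_w] = [X_w]$.

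Second, I would push forward iteratively through the tower from $i = d$ down to $i = 1$. Each application of Proposition~\ref{push of tensor} to a factor $\tau_i^s \, c_{q_i}(\calQ_i \otimes E_{q_i}^\vee)$ produces a single Segre class $\scS_{s + \lambda_i}(M_i - E_{q_i})$ on $Z_{i-1}$, using $\rk M_i = p_i - i + 1$ and $\lambda_i = q_i - p_i + i$. This is then converted, via the identities \eqref{eqrelS1}--\eqref{eqrelS2}, into a $\ZZ[\beta]$-linear combination of $\scA^{[i]}_\bullet = \scS_\bullet(H_{p_i} - E_{q_i})$. Whenever a surviving $\tau_j$ with $j < i$ is carried through a $\tau_i$-pushforward, the formal group law identity $c_1(\calQ_i \otimes \calQ_j^\vee) = \tau_i + \bar\tau_j$ produces, upon binomial expansion, precisely the factors $\binom{\cdot}{s}\beta^s$ appearing in the summation variable $s$ of \eqref{maineq1} and \eqref{maineq2}.

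Finally, the resulting expression for $[X_w]$ is a sum of products of the $\scA^{[i]}_\bullet$ which, by an alternating-sum identity of Jacobi--Trudi type, collapses to the stated determinant; the equivalence of \eqref{maineq1} and \eqref{maineq2} then follows from column operations rewriting each modified Segre class in one column as a $\ZZ[\beta]$-combination of those in later columns. The principal obstacle I anticipate is the precise construction of the resolution: verifying that the auxiliary bundles $M_i$ are well-defined of the right rank at every stage, that the product formula for $[\tilde X_w]$ holds on the nose without correction terms, and that $\pi$ is birational onto $X_w$ with multiplicity one. The monotonicity of the $p_i$, $q_i$ and the inclusion $\bff(w) \supset \Ess(w)$ in Definition~\ref{lemVexFilling} should give sufficient control to make the inductive construction at each level work.
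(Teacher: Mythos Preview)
Your proposal is correct and follows essentially the same route as the paper: your tower $Z$ is the paper's dual flag bundle $\Fl^*(H_{\bfp})$, your $\tilde X_w$ is their $Y_{\bff(w)}$ (with the same product formula, Lemma~\ref{lemYd}), and the iterated pushforward via Proposition~\ref{push of tensor} together with \eqref{eqrelS1}--\eqref{eqrelS2} is exactly their Lemma~\ref{lempush} and Proposition~\ref{propMainA}. The one point to correct is the origin of the factors $\binom{i-d}{s}\beta^s$: they do not come from the formal group law on $\calQ_i\otimes\calQ_j^\vee$, but from the denominator $c(D_{j-1};\beta)=\prod_{i<j}(1+\beta\tau_i)$ in \eqref{eqrelS1}; the paper packages this via a formal substitution $\phi_1$ on Laurent series in $t_1,\dots,t_d$, so that after the Vandermonde identity the $(i,j)$ entry is $\phi_1\bigl(t_i^{\lambda_i+j-i}(1+\beta t_i)^{i-d}\bigr)$, whose binomial expansion gives \eqref{maineq1} (and \eqref{maineq2} is obtained by rerunning the argument with \eqref{eqrelS2} in place of \eqref{eqrelS1}, rather than by column operations).
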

\begin{rem}
Independently from our work, Anderson \cite{Anderson2016} obtained a determinant formula of $[X_w]$ in terms of Chern classes, which is equivalent to (\ref{maineq2}) via writing Segre classes in terms of Chern classes. In particular, his formula also gives a formula which depends only on the data of the essential set of $w$.
\end{rem}
\subsection{Resolution of $X_w$}
Consider the dual flag bundle $\pi: \Fl^*(H_{\bfp}) \to X$ whose fiber at $x$ consists of dual flags $D_d|_x \surj \cdots \surj D_1|_x$ with $\dim D_i|_x=i$ and the commutative diagram of surjective maps
\[
\xymatrix{
H_{p_d}|_x \ar[r] \ar[d]& H_{p_{d-1}}|_x \ar[r]  \ar[d]&\cdots \ar[r] &  H_{p_2}|_x \ar[r] \ar[d]& H_{p_1}|_x \ar[d]\\ 
D_{d}|_x \ar[r] & D_{d-1}|_x \ar[r]  &\cdots \ar[r] &  D_2|_x \ar[r] & D_1|_x. 
}
\]
Let $D_i$ be the tautological bundle of rank $i$. It can be constructed as the following tower
\[
\Fl^*(H_{\bfp}):=\PP^*(\ker(H_{p_d} \to D_{d-1}))\to \cdots \to \PP^*(\ker(H_{p_2} \to D_1))\to \PP^*(H_{p_1}) \to X.
\]
We can regard $\ker(D_i \to D_{i-1})$ as the tautological quotient line bundle of $\PP^*(\ker(H_{p_i} \to D_{i-1}))$ where we set $D_0=0$. 
\begin{defn}\label{defnY}
For each flagging set $\bff(w)=\{(p_i,q_i), i=1,\dots,d\}$, define a sequence of subvarieties $Y_d \subset \cdots \subset Y_1 \subset \Fl^*(H_{\bfp})$ by
\[
Y_j:= \{ (x, D_{\bullet}|_x)  \in \Fl^*(H_{\bfp})\ |\ \rk(E_{q_i}|_x \to D_i|_x) = 0, \ 1 \leq i \leq j \}.
\]
Denote $Y_d$ by $Y_{\bff(w)}$.
\end{defn}
It is well known that $\pi$ maps $Y_{\bff(w)}$ to $X_w$ birationally and that $X_w$ has at worst rational singularities,  therefore $\pi_*[Y_{\bff(w)}]=[X_w]$ in $\CK^*(X)$.
\begin{lem}\label{lemYd}
In $\CK^*(\Fl^*(H_{\bfp}))$, we have
\[
[Y_{\bff(w)}] = \prod_{i=1}^r c_{q_i}(\ker(D_i \to D_{i-1}) \otimes E_{q_i}^{\vee}).
\]
\end{lem}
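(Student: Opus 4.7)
The plan is to argue by induction on $j$, proving the stronger claim
\[
[Y_j] = \prod_{i=1}^{j} c_{q_i}\bigl(\ker(D_i \to D_{i-1}) \otimes E_{q_i}^{\vee}\bigr)
\]
in $\CK^*(\Fl^*(H_{\bfp}))$, with the convention $D_0 = 0$. The base case $j=0$ is $[Y_0] = [\Fl^*(H_{\bfp})] = 1$, and the case $j = d$ yields the lemma.

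For the inductive step, the key observation is that on $Y_{j-1}$ the bundle map $\psi_j : E_{q_j} \to D_j$ factors through $\ker(D_j \to D_{j-1})$. Indeed, using the commutativity of the square with vertical maps $H_{p_j} \surj D_j$, $H_{p_{j-1}} \surj D_{j-1}$ and horizontal surjections $H_{p_j} \surj H_{p_{j-1}}$, $D_j \surj D_{j-1}$, together with the compatibility of $E_{q_j} \hookrightarrow E_{q_{j-1}} \to H_{p_{j-1}}$ with $E_{q_j} \to H_{p_j} \to H_{p_{j-1}}$ (which uses $q_j \leq q_{j-1}$, condition (i) of a flagging set), one rewrites $E_{q_j} \to D_j \to D_{j-1}$ as $E_{q_j} \hookrightarrow E_{q_{j-1}} \to H_{p_{j-1}} \to D_{j-1}$; this composite vanishes on $Y_{j-1}$ by the defining rank condition. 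Hence $\psi_j|_{Y_{j-1}}$ factors as a canonical section of the rank-$q_j$ bundle $\ker(D_j \to D_{j-1}) \otimes E_{q_j}^{\vee}$ over $Y_{j-1}$, and its zero locus is precisely $Y_j$. I then invoke the standard fact, valid in any oriented cohomology theory, that the class of the zero locus of a regular section $s$ of a vector bundle $V$ equals $c_{\rk V}(V)$ (see \cite{LevineMorel}); combined with the projection formula along the regular embedding $Y_{j-1} \hookrightarrow \Fl^*(H_{\bfp})$, this yields
\[
[Y_j] = [Y_{j-1}] \cdot c_{q_j}\bigl(\ker(D_j \to D_{j-1}) \otimes E_{q_j}^{\vee}\bigr),
\]
closing the induction. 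Regularity of each successive section follows from a dimension count: the relative dimension of $\Fl^*(H_{\bfp})$ over $X$ equals $\sum_{i=1}^d (p_i - i)$, while the expected total codimension of $Y_d$ is $\sum_i q_i$, so the anticipated relative dimension of $Y_d$ over $X$ is $-\sum_i(q_i - p_i + i) = -|\lambda(w)| = -\ell(w) = \dim X_w - \dim X$, matching the birationality of $\pi|_{Y_d}: Y_d \to X_w$.

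The hardest part will be making the factorization step rigorous, namely extracting the correct commutative diagram from the definitions of $\Fl^*(H_{\bfp})$ and of the bundle maps $E_q \to H_p$, and verifying that the flagging-set inequalities $q_j \leq q_{j-1}$ and $p_j \geq p_{j-1}$ really do produce the required vanishing along $Y_{j-1}$. A secondary point is justifying the regularity of each successive section; while the dimension count above is the conceptual reason, a rigorous verification may proceed by restricting to the open stratum of $X$ on which the genericity assumption applies, or alternatively by reducing the computation to a universal situation on an ambient flag bundle where transversality is automatic.
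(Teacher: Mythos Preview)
Your proposal is correct and follows essentially the same approach as the paper: induct on $j$, observe that over $Y_{j-1}$ the map $E_{q_j}\to D_j$ factors through $\ker(D_j\to D_{j-1})$ with zero locus $Y_j$, apply the top-Chern-class formula for a zero locus, and conclude via the projection formula. The paper's argument is terser---it simply asserts the existence of the ``obvious bundle map'' and cites \cite[Lemma 2.2]{HIMN} for the Chern-class identity---whereas you spell out the factorization using the inequalities $q_j\le q_{j-1}$, $p_j\ge p_{j-1}$ from the flagging set and add a dimension count to justify regularity; these are welcome clarifications but not a different method.
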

\begin{proof}
By definition, there is an obvious bundle map $E_{q_i} \to \ker(D_j\to D_{j-1})$ over $Y_{j-1}$ and its zero locus in $Y_{j-1}$ coincides with $Y_j$. Thus from the standard fact ({\it cf.} \cite[Lemma 2.2]{HIMN}) we obtain 
\[
[Y_j] = c_{q_j}(\ker(D_j\to D_{j-1})\otimes E_{q_j}^{\vee})
\]
in $\CK^*(Y_{j-1})$. The claim follows from the projection formula.
\end{proof}
\subsection{Pushforward formula}
Set $R=\CK^*(X)$. This is a graded algebra over $\ZZ[\beta]$. Let $t_1,\ldots,t_d$ be indeterminates of degree $1$. We use the multi-index notation $t^\bfs:=t_1^{s_1}\cdots t_d^{s_d}$ for $\bfs=(s_1,\dots,s_d)\in \ZZ^d$. A formal Laurent series 
\[
f(t_1,\ldots,t_d)=\sum_{\bfs\in\ZZ^d}a_{\bfs}t^{\bfs}
\]
is {\em homogeneous of degree} $m\in \ZZ$ if $a_{\bfs}$ is zero unless $a_{\bfs}\in R_{m-|\bfs|}$ with $|\bfs|=\sum_{i=1}^d s_i$. Let $\supp f = \{\bfs \in \ZZ^d \ |\ a_{\bfs}\not=0\}$.
For each $m \in \ZZ$, define $\calL^{R}_m$ to be the space of all formal Laurent series of homogeneous degree $m$ such that there exists $\sfn\in \ZZ^d$ such that $\sfn + \supp f$ is contained in the cone in $\ZZ^d$ defined by $s_1\geq0, \; s_1+s_2\geq 0, \;\cdots, \; s_1+\cdots + s_d \geq 0$. Then $\calL^{R}:=\bigoplus_{m\in \ZZ} \calL^{R}_m$ is a graded ring over $R$ with the obvious product. 
For each $i=1,\dots, d$, let $\calL^{R,i}$ be the $R$-subring of $\calL^R$ consisting of series that do not contain any negative powers of $ t_1,\dots, t_{i-1}$.  In particular, $\calL^{R,1}=\calL^{R}$. 
A series $f(t_1,\ldots,t_d)$ is a {\em power series} if it doesn't contain any negative powers of $t_1,\dots,t_d$. Let $R[[t_1,\ldots,t_d]]_{m}$ denote the set of all power series in $t_1,\dots, t_d$ of degree $m\in \ZZ$. We set $R[[t_1,\ldots,t_d]]_{\gr}:=\bigoplus_{m\in \ZZ}R[[t_1,\ldots,t_d]]_{m}$.
\begin{defn}
For each $j=1,\dots, d$, define a graded $R$-module homomorphism
\[
\phi_j: \calL^{R,j} \to \CK^*(\PP^*(\ker(H_{p_{j-1}} \to D_{j-2}))
\]
by setting $\phi_j( t_1^{s_1}\cdots  t_d^{s_d})= \tau_1^{s_1}\cdots \tau_{j-1}^{s_{j-1}}\scA_{s_j}^{[j]}\cdots \scA_{s_d}^{[d]}$ where $\tau_i:=c_1(\ker(D_i \to D_{i-1}))$.
\end{defn}
\begin{lem}\label{lempush}
Let $\pi_j: \PP^*(\ker(H_{p_j} \to D_{j-1}))\to \PP^*(\ker(H_{p_{j-1}} \to D_{j-2}))$. We have
\[
\pi_{j*}(\tau^sc_{q_j}(\ker(D_j \to D_{j-1}) \otimes E_{q_j}^{\vee})) =  \phi_j\left(t_j^{\lambda_j} \prod_{1\leq i< j} \frac{1 - t_i/t_j}{1+\beta t_i} \right),
\]
in $\CK^*(\PP^*(\ker(H_{p_{j-1}} \to D_{j-2}))$.
\end{lem}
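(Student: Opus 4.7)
The plan is to apply Proposition \ref{push of tensor} directly to the LHS and then to verify that the resulting Segre class of a virtual bundle matches the image of the indicated formal series under $\phi_j$. By construction, $\pi_j$ is the dual projective bundle $\PP^*(\ker(H_{p_j} \to D_{j-1})) \to \PP^*(\ker(H_{p_{j-1}} \to D_{j-2}))$ whose tautological quotient line bundle is $\ker(D_j \to D_{j-1})$, so that $\tau_j = c_1(\ker(D_j \to D_{j-1}))$. Since $\ker(H_{p_j} \to D_{j-1})$ has rank $p_j - (j-1)$ and $E_{q_j}$ has rank $q_j$, Proposition \ref{push of tensor} gives
$$\pi_{j*}\bigl(\tau_j^{s}\, c_{q_j}(\ker(D_j \to D_{j-1}) \otimes E_{q_j}^{\vee})\bigr) = \scS_{\lambda_j + s}\bigl(\ker(H_{p_j} \to D_{j-1}) - E_{q_j}\bigr),$$
after simplifying with $\lambda_j = q_j - p_j + j$ from the previous lemma.

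Next, I write $\ker(H_{p_j} \to D_{j-1}) - E_{q_j} = (H_{p_j} - E_{q_j}) - D_{j-1}$ in $K$-theory. Since $D_{j-1}$ admits a filtration whose successive quotients are the line bundles $\ker(D_i \to D_{i-1})$ with first Chern classes $\tau_1, \ldots, \tau_{j-1}$, multiplicativity of the Chern polynomial yields
$$\scS\bigl((H_{p_j} - E_{q_j}) - D_{j-1};\,u\bigr) = \scS\bigl(H_{p_j} - E_{q_j};\,u\bigr) \cdot \prod_{i<j} \frac{1 - \tau_i u}{1 + \beta \tau_i}.$$
Extracting the coefficient of $u^{\lambda_j + s}$ on both sides expresses the pushforward as a finite sum, indexed by subsets $I \subseteq \{1, \ldots, j-1\}$, of terms of the form $(-1)^{|I|}\tau^I \scA^{[j]}_{\lambda_j + s - |I|}\prod_{i<j}(1+\beta\tau_i)^{-1}$, where $\tau^I := \prod_{i \in I}\tau_i$.

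To match this with the RHS (with the natural shift $t_j^{\lambda_j + s}$ corresponding to the given power $s$ of $\tau_j$ on the LHS), expand the formal Laurent series $t_j^{\lambda_j + s}\prod_{i<j}\frac{1 - t_i/t_j}{1 + \beta t_i}$ as a sum of monomials in $t_1, \ldots, t_j$, in which $t_1, \ldots, t_{j-1}$ appear only in nonnegative powers. Applying $\phi_j$ monomial by monomial, using that $\phi_j(t_i) = \tau_i$ for $i < j$ and $\phi_j(t_j^m) = \scA^{[j]}_m$, and that the implicit factors $\scA^{[k]}_0 = \scS_0(H_{p_k} - E_{q_k})$ for $k > j$ each equal $1$ (a direct check from Definition \ref{defnSeg}, since the $u^0$-coefficient of $\frac{1}{1+\beta u^{-1}}\,c(V;\beta)/c(V;-u)$ is $(c(V;\beta))\cdot c(V;\beta)^{-1} = 1$), produces the same finite sum. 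This completes the identification.

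The main technical point is to work within the framework of $\calL^{R,j}$: one must check that the series in question lies in $\calL^{R,j}$ and that $\phi_j$ distributes properly across a product of a power series in $t_1, \ldots, t_{j-1}$ with a Laurent polynomial in $t_j$. Both are routine once one observes that $\prod_{i<j}(1+\beta\tau_i)$ is a unit in $\CK^*$ and that extraction of the $u^{\lambda_j + s}$ coefficient produces only finitely many nonzero terms, so no genuine convergence issue arises.
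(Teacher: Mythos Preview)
Your proof is correct and follows essentially the same route as the paper: apply Proposition~\ref{push of tensor} to obtain $\scS_{\lambda_j+s}\bigl((H_{p_j}-E_{q_j})-D_{j-1}\bigr)$, then expand this Segre class using the filtration of $D_{j-1}$ by the line bundles with first Chern classes $\tau_1,\dots,\tau_{j-1}$, and recognize the result as $\phi_j$ applied to the stated series. The paper carries out the expansion by quoting identity~(\ref{eqrelS1}) and the elementary-symmetric-polynomial identity, whereas you work directly with the generating function $\scS(-;u)$; these are the same computation in different clothing. Your explicit check that $\scA_0^{[k]}=1$ for $k>j$ and your remark about the $s$-shift (so that the RHS should read $t_j^{\lambda_j+s}$) are welcome clarifications that the paper leaves implicit.
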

\begin{proof}
By applying Proposition \ref{push of tensor}, we have
\begin{eqnarray*}
\pi_{j*}(\tau^sc_{q_j}(\ker(D_j \to D_{j-1})\otimes E_{q_j}^{\vee})) 
&=&\scS_{\lambda_j}(H_{p_j}- E_{q_j} -D_{j-1})\\
&=& \frac{1}{c(D_{j-1};\beta)}\sum_{p=0}^{j-1} (-1)^p c_p(D_{j-1})\scS_{\lambda_j-p}(H_{p_j}- E_{q_j}),
\end{eqnarray*}
where the second equality follows from (\ref{eqrelS1}). Thus by using $\phi_j$, we obtain the desired formula where we have used the identity
\begin{eqnarray*}
\frac{1}{\prod_{1\leq i\leq j-1} (1+\beta t_i)} \sum_{p=0}^{j-1} (-1)^p e_p(t_1,\dots,t_{j-1}) t_j^{\lambda_j-p}
=t_j^{\lambda_j} \prod_{1\leq i< j} \frac{1 - t_i/t_j}{1+\beta t_i}.
\end{eqnarray*}
\end{proof}
By applying Lemma \ref{lempush} to Lemma \ref{lemYd} successively, we obtain the following proposition.
\begin{prop}\label{propMainA}
We have
\[
\pi_*[Y_{\bff(w)}] = \phi_1\left(\prod_{j=1}^d t_j^{\lambda_j} \cdot \prod_{1\leq i< j\leq d} \frac{1 - t_i/t_j}{1+\beta t_i} \right).
\]
\end{prop}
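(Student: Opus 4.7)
The plan is to compute $\pi_*[Y_{\bff(w)}]$ by iterated pushforward through the tower
\[
B_d \xrightarrow{\pi_d} B_{d-1} \to \cdots \to B_1 \xrightarrow{\pi_1} B_0 = X, \qquad B_j := \PP^*(\ker(H_{p_j}\to D_{j-1})),
\]
invoking Lemma \ref{lempush} once at each stage. Concretely, I would prove by descending induction on $k\in\{d,d-1,\dots,0\}$ that
\[
(\pi_{k+1}\circ\cdots\circ\pi_d)_*[Y_{\bff(w)}] = \left(\prod_{j=1}^k c_{q_j}(\ker(D_j\to D_{j-1})\otimes E_{q_j}^{\vee})\right)\cdot \phi_{k+1}(g_k),
\]
where $g_k := \prod_{j=k+1}^d t_j^{\lambda_j}\prod_{1\leq i<j,\;k+1\leq j\leq d}(1-t_i/t_j)/(1+\beta t_i)$. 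The base case $k=d$ is Lemma \ref{lemYd} (with the conventions $g_d=1$ and $\phi_{d+1}$ acting trivially on $1$), and the desired identity of Proposition \ref{propMainA} is exactly the $k=0$ instance.

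For the inductive step $k\to k-1$, I apply $\pi_{k*}$. The factors $c_{q_j}(\ldots)$ with $j<k$ are pulled back from $B_{k-1}$ and so pass out by the projection formula, reducing the problem to computing $\pi_{k*}\bigl(\phi_{k+1}(g_k)\cdot c_{q_k}(\ker(D_k\to D_{k-1})\otimes E_{q_k}^{\vee})\bigr)$. I expand $g_k = \sum_n g_{k,n}\,t_k^n$ as a Laurent series in $t_k$ whose coefficients are independent of $t_k$; since $\phi_{k+1}(t_k^n)=\tau_k^n$ while $\phi_{k+1}(g_{k,n})$ is pulled back from $B_{k-1}$, the projection formula combined with Lemma \ref{lempush} applied termwise (with parameter $s=n$) gives
\[
\pi_{k*}\bigl(\phi_{k+1}(g_k)\cdot c_{q_k}(\ldots)\bigr) = \sum_{n} \phi_{k+1}(g_{k,n})\cdot \phi_k\!\left(t_k^{\lambda_k+n}\prod_{i<k}\frac{1-t_i/t_k}{1+\beta t_i}\right).
\]

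The remaining, and most delicate, step is to recognize this sum as $\phi_k(g_{k-1})$, where $g_{k-1} = t_k^{\lambda_k}\,v(t_k)\cdot g_k$ with $v(t_k):=\prod_{i<k}(1-t_i/t_k)/(1+\beta t_i)$. This is a formal bookkeeping identity: both $\phi_k$ and $\phi_{k+1}$ send $t_i^{s_i}$ to $\tau_i^{s_i}$ for $i<k$ and to $\scA^{[i]}_{s_i}$ for $i>k$, differing only in their action on $t_k$ (sending it to $\scA^{[k]}$ versus $\tau_k$); the extra factor $\phi_k(t_k^{\lambda_k+n}v(t_k))$ on the right supplies exactly the $\scA^{[k]}_*$-contribution corresponding to the $t_k^n$-coefficient of $t_k^{\lambda_k}v(t_k)\cdot g_k$, so matching monomials confirms the equality. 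The subtlety worth checking explicitly is that these Laurent-series manipulations remain within the rings $\calL^{R,j}$: since $v(t_k)$ contributes only nonnegative powers of $t_1,\dots,t_{k-1}$, we have $g_{k-1}\in \calL^{R,k}$, so $\phi_k(g_{k-1})$ is indeed well-defined. This closes the induction and delivers the proposition.
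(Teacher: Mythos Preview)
Your proof is correct and follows exactly the approach indicated by the paper, namely successive application of Lemma~\ref{lempush} to the product formula of Lemma~\ref{lemYd}, pushing forward one stage at a time through the tower. The paper compresses this into a single sentence, while you have made the descending induction and the monomial-level bookkeeping explicit.
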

\subsection{Proof of Theorem \ref{thmMainA}}
By the formula of the Vadermonde determinant, we have
\[
\prod_{j=1}^d t_j^{\lambda_j} \cdot \prod_{1\leq i< j\leq d} \frac{1 - t_i/t_j}{1+\beta t_i}  = \det\left( t_i^{\lambda_i+j-i} \left(1+\beta t_i\right)^{i-r} \right).
\]
Since $\pi_*[Y_{\bff(w)}]=[X_w]$, Proposition \ref{propMainA} implies that
\[
[X_w] = \det\left(\phi_1\left(t_i^{\lambda_i+j-i} \left(1+\beta t_i\right)^{i-d}\right)\right).
\]
By writing
\[
t_i^{\lambda_i+j-i} \left(1+\beta t_i\right)^{i-d}  = \sum_{s=0}^{\infty} \binom{i-d}{s} \beta^s t_i^{\lambda_i+j-i+s},
\]
and applying $\phi_1$, we obtain (\ref{maineq1}). 

The proof of (\ref{maineq2}) is analogous. We replace Lemma \ref{lempush} with  
\[
\pi_{j*}(\tau^sc_{q_j}(\ker(D_j \to D_{j-1}) \otimes E_{q_j}^{\vee})) =  \phi_j\left(t_j^{\lambda_j} \prod_{1\leq i< j} (1 - \bar t_i/\bar t_j) \right), 
\]
which can be proved similarly but one uses (\ref{eqrelS2}) instead of (\ref{eqrelS1}). Then, instead of Proposition \ref{propMainA}, we have 
\[
\pi_*[Y_{\bff(w)}] = \phi_1\left(\prod_{j=1}^d t_j^{\lambda_j} \cdot \prod_{1\leq i< j\leq d} (1 - \bar t_i/\bar t_j) \right).
\]
Now by the formula of the Vandermonde determinant, we obtain (\ref{maineq2}) ({\it cf.} \cite[Theorem 3.10.]{HIMN}).
\section{Vexillary double Grothendieck polynomials}\label{SecGro}
For a given partition $\lambda$, a flagging $f$ of $\lambda$ is a sequence of natural numbers $(f_1,\dots,f_r)$ where $r$ is the length of $\lambda$. A flagged set-valued tableaux in shape $\lambda$ with flagging $f$ is a set-valued tableaux of shape $\lambda$ such that the numbers used in the $i$-th row are at most $f_i$. Let $\bff(w)=\{(p_i,q_i), i=1,\dots, r\}$ be a flagging set of $w$ and consider the flagging $f_w$ with $(f_w)_i:=p_i$. Let $\FSVT(w)$ be the set of all flagged set-valued tableaux of $w$ with the flagging $f_w$. It is worth remaking that the set $\FSVT(w)$ is the same for any choice of flagging $f$ such that $p'\leq f_i\leq p$ where $p'$ is the row index of $\phi^{-1}(i,\lambda_i)$ and $p$ is the row index of the preimage under $\phi$ of the south-east corner of $\lambda$ whose column contains $(i,\lambda_i)$. 

Let $x=(x_1,x_2,\dots )$ and $b=(b_1,b_2,\dots)$ be sets of infinitely many indeterminants. In \cite{KnutsonMillerYong} (see also \cite{KnutsonMillerYong2}), Knutson--Miller--Yong described the double Grothendick polynomials of Lascoux--Sch{\"u}tzenberger \cite{LascouxSchutzenberger3} associated to a vexillary permutation $w$ as a generating function of flagged set-valued tableaux. Namely we have
\[
G_w(x|b) = \sum_{\tau \in \FSVT(w)} (-1)^{|\tau| - |\lambda(w)|} \prod_{e\in \tau} (x_{val(e)} \oplus y_{val(e) + c(e)-r(e)}),
\]
where $val(e)$ is the numerical value of $e$ and $c(e)$ and $r(e)$ are the column and row indices of $e$ respectively. Note that we replaced $1-\mathbf{x_i}$ and $1-\mathbf{y_i}^{-1}$ in \cite{KnutsonMillerYong} by $x_i$ and  by $b_i$ respectively. We know from the work of Fulton--Lascoux \cite{FultonLascoux} and Buch \cite{BuchQuiver} that 
\begin{equation}\label{XG}
[X_w] = G_w(x|b),
\end{equation}
where we set $x_i = c_1(\ker(H_i\to H_{i-1}))$ and $b_i=c_1((E_i/E_{i-1})^{\vee})$. 

For nonnegative integers $p,q$ and an integer $m$, we define $G_m^{[p,q]}(x|b)$ by
\[
\sum_{m\in \ZZ} G_m^{[{p},{q}]}(x|b) u^m = \frac{1}{1+\beta u^{-1}} \prod_{1\leq i \leq {p}}\frac{1+\beta x_i}{1-x_iu} \prod_{1\leq j \leq {q}} (1+b_j(\beta + u)).
\]
We see from Definition \ref{defnSeg} that $G_m^{[{p},{q}]}(x|b)=\scS_m(E-F)$ for vector bundles $E$ and $F$ of rank $p$ and $q$ with Chern roots $\{x_i\}$ and $\{\bar b_i\}$ respectively. Consequently we can derive from Theorem \ref{thmMainA} the following Jacobi-Trudi type formulas of vexillary double Grothendieck polynomials. 
\begin{thm}
Let $w \in S_n$ be a vexillary permutation. For an arbitrary flagging set $\{(p_i,q_i), i=1,\dots,d\}$ of $w$,  we have
\[
G_{w}(x|b) = \det\left(  \sum_{s=0}^{\infty} \binom{i-d}{s}\beta^s G_{\lambda_i+j-i+s}^{[p_i,q_i]}(x|b) \right)_{1\leq i,j \leq d},
\]
and also
\[
G_{w}(x|b) = \det\left(  \sum_{s=0}^{\infty} \binom{i-j}{s}\beta^s G_{\lambda_i+j-i+s}^{[p_i,q_i]}(x|b) \right)_{1\leq i,j \leq d},
\]
where $\lambda(w)=(\lambda_1,\dots,\lambda_d)$.
\end{thm}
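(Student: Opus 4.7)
The plan is to deduce both identities as direct corollaries of Theorem \ref{thmMainA} combined with the identification $[X_w] = G_w(x|b)$ from (\ref{XG}). The crucial observation, already flagged in the paragraph preceding the theorem, is that under the substitutions $x_i = c_1(\ker(H_i \to H_{i-1}))$ and $b_i = c_1((E_i/E_{i-1})^{\vee})$, the polynomial $G^{[p_i,q_i]}_m(x|b)$ coincides with the Segre class $\scA_m^{[i]} = \scS_m(H_{p_i} - E_{q_i})$. Once this is established, substituting the identification into the two determinant formulas (\ref{maineq1}) and (\ref{maineq2}) and invoking (\ref{XG}) will yield both desired identities.

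First I would work in a universal geometric setting: choose a smooth quasi-projective variety $X$ (for example, a sufficiently large product of projective bundles, or a standard approximation of a classifying space for a product of general linear groups) together with a generic sequence $E_1 \subset \cdots \subset E_{n-1} \to H_{n-1} \surj \cdots \surj H_1$ such that the codimension of $X_w$ is $\ell(w)$ and so that the Chern classes $\{x_i\}$ and $\{b_j\}$ behave as algebraically independent indeterminates up to the degree relevant to the identity. Since both sides of the proposed identities are universal polynomial expressions in $x$ and $b$, checking them in one such sufficiently rich setting is enough.

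Next I would verify the key identification $\scA_m^{[i]} = G^{[p_i,q_i]}_m(x|b)$ by means of the splitting principle: the filtrations $0 \subsetneq \ker(H_1) \subsetneq \cdots$ and $0 \subsetneq E_1 \subsetneq \cdots$ furnished by the flags give Chern roots $x_1,\ldots,x_{p_i}$ for $H_{p_i}$ and $\bar b_1,\ldots,\bar b_{q_i}$ for $E_{q_i}$, where $\bar b = -b/(1+\beta b)$. Plugging these roots into Definition \ref{defnSeg} and using the elementary identities $1 - \bar b u = (1 + b(\beta + u))/(1+\beta b)$ and $1 + \beta \bar b = 1/(1+\beta b)$, the generating function for $\scS(H_{p_i} - E_{q_i}; u)$ collapses to the one defining $G_m^{[p_i,q_i]}(x|b)$. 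Substituting this into (\ref{maineq1}) and (\ref{maineq2}) and applying (\ref{XG}) completes both identities. The only mildly delicate point is the universality step—ensuring that an equality of classes in $\CK^*(X)$ in one geometric model is equivalent to a polynomial identity in the variables $x$ and $b$—but this is handled by choosing $X$ as an iterated flag bundle of sufficiently large rank so that the Chern classes in question are algebraically independent up to any required degree.
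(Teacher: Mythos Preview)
Your proposal is correct and follows essentially the same approach as the paper: the theorem is stated as an immediate consequence of Theorem~\ref{thmMainA} together with the identification $[X_w]=G_w(x|b)$ from (\ref{XG}), once one observes that $G_m^{[p,q]}(x|b)=\scS_m(E-F)$ when $E$ and $F$ have Chern roots $\{x_i\}$ and $\{\bar b_j\}$. Your write-up is simply more explicit than the paper's one-line derivation, spelling out the generating-function check and the universality step that the paper leaves to the reader.
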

\section{Generalization to algebraic cobordism}\label{SecAlgCob}
In this section, we discuss the generalization of Theorem \ref{thmMainA} to the algebraic cobordism of Levine--Morel \cite{LevineMorel}. Let $\Omega^*(X)$ denote the algebraic cobordism of $X$.

First we recall, from \cite{HudsonMatsumura}, the formulas for the Segre classes in algebraic cobordism. Let $F_\Omega(u,v)$ be the formal group law of algebraic cobordism, the universal one defined over the Lazard ring $\LL$. We denote the formal inverse of $x$ by $\chi(x)$. Let $P(z,x)$ be the unique power series in $x$ and $z$ defined by $F_{\Omega}(z,\chi(x))=(z-x)P(z,x)$. Let $E$ and $F$ denote vector bundles on $X$ of rank $e$ and $f$ respectively. For each integer $s\geq 0$, we define the class $w_{-s}(E)$ in $\Omega^{-s}(X)$ by the following generating function
\[
w(E;u):=\sum_{s= 0}^{\infty} {w}_{-s}(E) u^{-s} := \prod_{q=1}^e P(u^{-1}, x_{q}),
\]
where $x_q$ for $q=1,\dots,e$ are Chern roots of $E$. Moreoever, let 
\[
\scP(u):=\sum_{i=0}^{\infty} [\PP^i]u^{-i},
\]
where $[\PP^i]$ is the class of the projective space $\PP^i$ of degree $-i$ in $\LL$. We can define the relative Segre classes of a virtual bundle $E-F$ in $\Omega^*(X)$ by the following generating function:
\begin{equation}\label{relSegALG}
\scS(E-F;u) := \sum_{k\in \ZZ} \scS_k(E-F)u^k:= \frac{\scP(u)}{c(E-F;-u)w(E-F;u)}.
\end{equation}
By Theorem 3.9 in \cite{HudsonMatsumura}, we can express the relative Segre classes by the pushforward of Chern classes along a projective bundle as follows. Let $\pi: \PP^*(E) \to X$ be the dual projective bundle of $E$, $\calQ$ its tautological quotient line bundle, and $\tau:=c_1(\calQ)$. We have
\begin{equation}\label{pushSegAlg}
\pi_*(\tau^s c_f(\calQ \otimes F^{\vee}))  = \scS_{f-e+1+s}(E-F).
\end{equation}
We see from this that the Segre class $\scS_m(E)$ for a vector bundle $E$ is the natural generalization of the one given by Fulton in \cite{FultonIntersection}.

Now we describe the main theorem in this section. Let $\bff(w)=\{(p_i,q_i), i=1,\dots, d\}$ be a flagging set of a vexillary permutation $w\in S_n$. We define $Y_{\bff(w)}$ as in Definition \ref{defnY} and also let $\scA_{m}^{[i]}:=\scA_{m}^{[i]}(\bff(w)):=\scS_{m}(H_{p_i}-E_{q_i})$ in $\Omega^*(X)$ as before. If we set 
\[
\prod_{1\leq i<j\leq d}P(t_j,t_i)=\sum_{\bfs=(s_1,\dots, s_d) \in\ZZ_{\geq 0}^r}a_{\bfs}\cdot t_1^{s_1}\cdots t_d^{s_d}, \ \ \ a_{\bfs} \in \LL
\]
and denote the Schur determinant by 
\[
\Delta_{\bfm}(\scA^{[1]}, \dots, \scA^{[d]}) := \det\left(\scA_{m_i+j-i}^{[i]}\right)_{1\leq i,j\leq d}
\]
for each $\bfm=(m_1,\dots,m_d) \in \ZZ^d_{\geq 0}$, we can describe the class of $Y_{\bff(w)}$ in $\Omega^*(X)$ as follows. Let us stress that the class $[Y_{\bff(w)} \to X]$ in $\Omega^*(X)$ does depend on the different choice of a flagging set $\bff(w)$ of $w$. 
\begin{thm}\label{thmMainAALG}
For each flagging set $\bff(w)=\{(p_i,q_i) \ |\ i=1,\dots, d\}$ of a vexillary permutation $w\in S_n$, we have
\[
[Y_{\bff(w)} \to X] = \sum_{\bfs=(s_1,\dots, s_d) \in\ZZ^d_{\geq 0}}a_{\bfs} \Delta_{\lambda(w)+\bfs}(\scA^{[1]}, \dots, \scA^{[d]}). 
\]
\end{thm}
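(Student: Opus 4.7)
The plan is to carry over the proof of Theorem \ref{thmMainA} to algebraic cobordism, substituting the pushforward formula (\ref{pushSegAlg}) for Proposition \ref{push of tensor} and the universal formal group law $F_\Omega$ for the multiplicative formal group law of connective K-theory. The resolution $Y_{\bff(w)}\subset \Fl^*(H_\bfp)$ of Definition \ref{defnY} and the identity $[Y_{\bff(w)}]=\prod_{i=1}^d c_{q_i}(\ker(D_i\to D_{i-1})\otimes E_{q_i}^{\vee})$ of Lemma \ref{lemYd} carry over verbatim in $\Omega^*$, since both proofs only use the zero-locus description and the projection formula, both of which are available in any oriented cohomology theory.

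The key single-step ingredient is the cobordism analog of Lemma \ref{lempush}, namely
\[
\pi_{j*}\bigl(\tau_j^s\,c_{q_j}(\ker(D_j\to D_{j-1})\otimes E_{q_j}^{\vee})\bigr)=\phi_j\Bigl(t_j^{\lambda_j+s}\prod_{1\le i<j}\frac{F_\Omega(t_j,\chi(t_i))}{t_j}\Bigr).
\]
By (\ref{pushSegAlg}) the left-hand side equals $\scS_{\lambda_j+s}(H_{p_j}-E_{q_j}-D_{j-1})$. The generating-series definition (\ref{relSegALG}), together with multiplicativity of $c(\cdot;u)$ and $w(\cdot;u)$ under direct sums, yields
\[
\scS(E-F;u)=\scS(E;u)\cdot c(F;-u)w(F;u)=\scS(E;u)\cdot u^{\rk F}\prod_l F_\Omega(u^{-1},\chi(y_l))
\]
for $\{y_l\}$ the Chern roots of $F$, using the identity $(1-yu)P(u^{-1},y)=uF_\Omega(u^{-1},\chi(y))$. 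Applied to $F=D_{j-1}$ (whose filtration quotients along the flag have first Chern classes $\tau_1,\dots,\tau_{j-1}$), identifying $u$ with $t_j$, and invoking the coefficient identity $[u^{-k}]\prod_i F_\Omega(u^{-1},\chi(\tau_i))=[t_j^{k}]\prod_i F_\Omega(t_j,\chi(\tau_i))$, the extraction of the $u^{\lambda_j+s}$-coefficient reproduces exactly the claimed $\phi_j$-image.

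Iterating this single-step formula up the tower $\Fl^*(H_\bfp)\to\cdots\to\PP^*(H_{p_1})\to X$, exactly as in the proof of Proposition \ref{propMainA} — at each stage expanding the intermediate $\phi_{k+1}$-image in powers of the fiber variable $t_k$, then applying the projection formula together with the fact that $\phi_{k+1}$ and $\phi_k$ agree on series free of $t_k$ — one arrives at
\[
\pi_*[Y_{\bff(w)}]=\phi_1\Bigl(\prod_{j=1}^d t_j^{\lambda_j}\prod_{1\le i<j\le d}\frac{F_\Omega(t_j,\chi(t_i))}{t_j}\Bigr)=\phi_1\Bigl(\prod_{j=1}^d t_j^{\lambda_j-j+1}\prod_{i<j}(t_j-t_i)\prod_{i<j}P(t_j,t_i)\Bigr),
\]
where the second equality uses $F_\Omega(t_j,\chi(t_i))=(t_j-t_i)P(t_j,t_i)$.

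Finally, substituting $\prod_{i<j}P(t_j,t_i)=\sum_\bfs a_\bfs t^\bfs$ and applying the Vandermonde identity $\prod_j t_j^{m_j-j+1}\prod_{i<j}(t_j-t_i)=\det(t_i^{m_i+j-i})_{1\le i,j\le d}$ with $m_i=\lambda_i+s_i$ transforms the right-hand side into $\sum_\bfs a_\bfs\det(\scA_{\lambda_i+s_i+j-i}^{[i]})=\sum_\bfs a_\bfs\Delta_{\lambda(w)+\bfs}(\scA^{[1]},\dots,\scA^{[d]})$, as required. The main technical obstacle is establishing the single-step Segre identity above, which is the cobordism counterpart of (\ref{eqrelS1}); once this is in hand, the remainder of the argument is essentially the same bookkeeping as in Sections \ref{SecCK}--\ref{SecMain}, with $F_\Omega$ factors replacing the connective K-theory ones.
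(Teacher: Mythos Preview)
Your proof is correct and follows essentially the same route as the paper's own argument: Lemma \ref{lemYd} in $\Omega^*$, the single-step pushforward (your $\phi_j$-identity is exactly the paper's equation (\ref{alg2eq}) once one rewrites $F_\Omega(t_j,\chi(t_i))/t_j=(1-t_i/t_j)P(t_j,t_i)$), iteration up the tower, and the Vandermonde expansion followed by $\phi_1$. The only difference is that you supply a short derivation of the relative Segre identity $\scS(E-F;u)=\scS(E;u)\,u^{\rk F}\prod_l F_\Omega(u^{-1},\chi(y_l))$, whereas the paper defers this to \cite{HudsonMatsumura}.
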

\begin{proof}
The proof is similar to the one for connective $K$-theory in Section \ref{SecMain} and to the one in \cite{HudsonMatsumura}. Here we only give an outline of the proof and the details are left to the readers. In $\Omega^*(\Fl^*(H_{\bfp}))$, we have
\begin{equation}\label{alg1eq}
[Y_{\bff(w)} \to \Fl^*(H_{\bfp})] = \prod_{i=1}^d c_{q_i}(\ker(D_i \to D_{i-1}) \otimes E_{q_i}^{\vee}).
\end{equation}
As before, the graded $R$-module homomorphism $\phi_j: \calL^{R,j} \to \Omega^*(\PP^*(\ker(H_{p_{j-1}} \to D_{j-2}))$ is defined by setting $\phi_j( t_1^{s_1}\cdots  t_d^{s_d})= \tau_1^{s_1}\cdots \tau_{j-1}^{s_{j-1}}\scA_{s_j}^{[j]}\cdots \scA_{s_d}^{[d]}$. Then, similarly to Lemma \ref{lempush} and \cite[Lemma 4.6]{HudsonMatsumura}, the definition of $\phi_j$ together with (\ref{pushSegAlg}) allows us to obtain
\begin{equation}\label{alg2eq}
\pi_{j*}(\tau^sc_{q_j}(\ker(D_j \to D_{j-1}) \otimes E_{q_j}^{\vee})) =  \phi_j\left(t_j^{\lambda_j} \prod_{1\leq i< j} (1 - t_i/t_j)P(t_j, t_i) \right).
\end{equation}
By applying $\pi_*$ to (\ref{alg1eq}) and using (\ref{alg2eq}) successively, we have
\[
[Y_{\bff(w)} \to X]=\pi_*[Y_{\bff(w)} \to \Fl^*(H_{\bfp})] = \phi_1\left(\prod_{j=1}^d t_j^{\lambda_j} \cdot \prod_{1\leq i< j\leq d} (1 - t_i/t_j)P(t_j,t_i)\right).
\]
By Vandermonde's determinant formula and by applying $\phi_1$, we obtain the desired formula.
\end{proof}

\vspace{3mm}

\noindent\textbf{Acknowledgements.} 
We would like to thank David Anderson for his helpful comments on the earlier versions of the manuscript. The second author is supported by Grant-in-Aid for Young Scientists (B) 16K17584.

\bibliography{references}{}

\def\cprime{$'$}
\begin{thebibliography}{10}

\bibitem{Anderson2016}
{\sc {Anderson}, D.}
\newblock {K-theoretic Chern class formulas for vexillary degeneracy loci.
  Preprint}.

\bibitem{AndersonFulton2}
{\sc Anderson, D., and Fulton, W.}
\newblock {C}hern class formulas for classical-type degeneracy loci.
\newblock \texttt{arXiv:1504.03615}.

\bibitem{AndersonFulton}
{\sc Anderson, D., and Fulton, W.}
\newblock {D}egeneracy {L}oci, {P}faffians, and {V}exillary {S}igned
  {P}ermutations in {T}ypes {B}, {C}, and {D}.
\newblock \texttt{arXiv:1210.2066}.

\bibitem{BuchQuiver}
{\sc Buch, A.~S.}
\newblock Grothendieck classes of quiver varieties.
\newblock {\em Duke Math. J. 115}, 1 (2002), 75--103.

\bibitem{BuchLRrule}
{\sc Buch, A.~S.}
\newblock A {L}ittlewood-{R}ichardson rule for the {$K$}-theory of
  {G}rassmannians.
\newblock {\em Acta Math. 189}, 1 (2002), 37--78.

\bibitem{DaiLevine}
{\sc Dai, S., and Levine, M.}
\newblock Connective algebraic {$K$}-theory.
\newblock {\em J. K-Theory 13}, 1 (2014), 9--56.

\bibitem{Damon1974}
{\sc Damon, J.}
\newblock The {G}ysin homomorphism for flag bundles: applications.
\newblock {\em Amer. J. Math. 96\/} (1974), 248--260.

\bibitem{DoubleGrothendieckFomin}
{\sc Fomin, S., and Kirillov, A.~N.}
\newblock The {Y}ang-{B}axter equation, symmetric functions, and {S}chubert
  polynomials.
\newblock In {\em Proceedings of the 5th {C}onference on {F}ormal {P}ower
  {S}eries and {A}lgebraic {C}ombinatorics ({F}lorence, 1993)\/} (1996),
  vol.~153, pp.~123--143.

\bibitem{GrothendieckFomin}
{\sc Fomin, S., and Kirillov, A.~N.}
\newblock Grothendieck polynomials and the {Y}ang-{B}axter equation.
\newblock In {\em Formal power series and algebraic combinatorics/{S}\'eries
  formelles et combinatoire alg\'ebrique}. DIMACS, Piscataway, NJ, sd,
  pp.~183--189.

\bibitem{FlagsFulton}
{\sc Fulton, W.}
\newblock Flags, {S}chubert polynomials, degeneracy loci, and determinantal
  formulas.
\newblock {\em Duke Math. J. 65}, 3 (1992), 381--420.

\bibitem{FultonIntersection}
{\sc Fulton, W.}
\newblock {\em Intersection theory}, second~ed., vol.~2 of {\em Ergebnisse der
  Mathematik und ihrer Grenzgebiete. 3. Folge. A Series of Modern Surveys in
  Mathematics [Results in Mathematics and Related Areas. 3rd Series. A Series
  of Modern Surveys in Mathematics]}.
\newblock Springer-Verlag, Berlin, 1998.

\bibitem{FultonLascoux}
{\sc Fulton, W., and Lascoux, A.}
\newblock A {P}ieri formula in the {G}rothendieck ring of a flag bundle.
\newblock {\em Duke Math. J. 76}, 3 (1994), 711--729.

\bibitem{Hudson}
{\sc Hudson, T.}
\newblock A {T}hom-{P}orteous formula for connective {$K$}-theory using
  algebraic cobordism.
\newblock {\em J. K-Theory 14}, 2 (2014), 343--369.

\bibitem{HIMN}
{\sc {Hudson}, T., {Ikeda}, T., {Matsumura}, T., and {Naruse}, H.}
\newblock {Degeneracy Loci Classes in $K$-theory - Determinantal and Pfaffian
  Formula -}.
\newblock {\em ArXiv e-prints\/} (Apr. 2015).

\bibitem{HudsonMatsumura}
{\sc {Hudson}, T., and {Matsumura}, T.}
\newblock {Segre classes and Kempf-Laksov formula in algebraic cobordism}.
\newblock {\em ArXiv e-prints\/} (Feb. 2016).

\bibitem{KempfLaksov}
{\sc Kempf, G., and Laksov, D.}
\newblock The determinantal formula of {S}chubert calculus.
\newblock {\em Acta Math. 132\/} (1974), 153--162.

\bibitem{KirillovDunkl}
{\sc Kirillov, A.~N.}
\newblock On some algebraic and combinatorial properties of {D}unkl elements.
\newblock {\em Internat. J. Modern Phys. B 26}, 27-28 (2012), 1243012, 28.

\bibitem{Kirillov2015}
{\sc Kirillov, A.~N.}
\newblock On some quadratic algebras {I} {$\frac{1}{2}$}: combinatorics of
  {D}unkl and {G}audin elements, {S}chubert, {G}rothendieck, {F}uss-{C}atalan,
  universal {T}utte and reduced polynomials.
\newblock {\em SIGMA Symmetry Integrability Geom. Methods Appl. 12\/} (2016),
  002, 172 pages.

\bibitem{KnutsonMillerYong2}
{\sc Knutson, A., Miller, E., and Yong, A.}
\newblock Tableau complexes.
\newblock {\em Israel J. Math. 163\/} (2008), 317--343.

\bibitem{KnutsonMillerYong}
{\sc Knutson, A., Miller, E., and Yong, A.}
\newblock Gr\"obner geometry of vertex decompositions and of flagged tableaux.
\newblock {\em J. Reine Angew. Math. 630\/} (2009), 1--31.

\bibitem{ClassesLascoux}
{\sc Lascoux, A.}
\newblock Classes de {C}hern des vari\'et\'es de drapeaux.
\newblock {\em C. R. Acad. Sci. Paris S\'er. I Math. 295}, 5 (1982), 393--398.

\bibitem{Lascoux1}
{\sc Lascoux, A.}
\newblock Anneau de {G}rothendieck de la vari\'et\'e de drapeaux.
\newblock In {\em The {G}rothendieck {F}estschrift, {V}ol.\ {III}}, vol.~88 of
  {\em Progr. Math.} Birkh\"auser Boston, Boston, MA, 1990, pp.~1--34.

\bibitem{SchubertLascoux}
{\sc Lascoux, A., and Sch{\"u}tzenberger, M.-P.}
\newblock Polyn\^omes de {S}chubert.
\newblock {\em C. R. Acad. Sci. Paris S\'er. I Math. 294}, 13 (1982), 447--450.

\bibitem{LascouxSchutzenberger3}
{\sc Lascoux, A., and Sch{\"u}tzenberger, M.-P.}
\newblock Structure de {H}opf de l'anneau de cohomologie et de l'anneau de
  {G}rothendieck d'une vari\'et\'e de drapeaux.
\newblock {\em C. R. Acad. Sci. Paris S\'er. I Math. 295}, 11 (1982), 629--633.

\bibitem{Lenart2000}
{\sc Lenart, C.}
\newblock Combinatorial aspects of the {$K$}-theory of {G}rassmannians.
\newblock {\em Ann. Comb. 4}, 1 (2000), 67--82.

\bibitem{LevineMorel}
{\sc Levine, M., and Morel, F.}
\newblock {\em Algebraic cobordism}.
\newblock Springer Monographs in Mathematics. Springer, Berlin, 2007.

\bibitem{MacdonaldNote}
{\sc {Macdonald}, I.}
\newblock {N}otes on {S}chubert {P}olynomials.
\newblock Publ. LACIM 6, Univ. de Qu\'ebec \`a Montr\'eal, Montr\'eal, 1991.

\bibitem{Vishik}
{\sc Vishik, A.}
\newblock Symmetric operations in algebraic cobordism.
\newblock {\em Adv. Math. 213}, 2 (2007), 489--552.

\bibitem{Wachs}
{\sc Wachs, M.~L.}
\newblock Flagged {S}chur functions, {S}chubert polynomials, and symmetrizing
  operators.
\newblock {\em J. Combin. Theory Ser. A 40}, 2 (1985), 276--289.

\bibitem{Yeliussizov}
{\sc {Yeliussizov}, D.}
\newblock {Duality and deformations of stable Grothendieck polynomials}.
\newblock to appear in J. Algebraic Combinatorics.

\end{thebibliography}
\bibliographystyle{acm}
\begin{small}
{\scshape
\noindent Thomas Hudson, Fachgruppe Mathematik
und Informatik, Bergische Universit\"{a}t Wuppertal, Gaustrasse 20, 42119 Wuppertal, Germany
}
\end{small}

{\textit{email address}: \tt{hudson@math.uni-wuppertal.de}}
 
\begin{small}
{\scshape
\noindent Tomoo Matsumura, Department of Applied Mathematics, Okayama University of Science, Okayama 700-0005, Japan
}
\end{small}

{\textit{email address}: \tt{matsumur@xmath.ous.ac.jp}}

\end{document}